\documentclass[12pt,a4paper,twoside,reqno]{amsart}
\usepackage[a4paper,top=3cm,bottom=3cm,inner=2.5cm,outer=2.5cm]{geometry}
\usepackage[utf8]{inputenc}
\usepackage[british]{babel}
\usepackage[square]{natbib}
\usepackage{graphicx,xcolor}
\usepackage{xspace,centernot}
\usepackage[inline]{enumitem}
\usepackage{amsthm,amsfonts,amsmath,amssymb}
\usepackage[colorlinks=true]{hyperref}
\hypersetup{allcolors=[rgb]{0.1,0.1,0.4}}
\usepackage{tikz}
\usetikzlibrary{cd}
\tikzcdset{every label/.append style={font=\normalsize}}
\tikzset{iso/.style={draw=none,every to/.append style={edge node={node [sloped, allow upside down, auto=false]{$\cong$}}}}}
\usepackage[all]{xy}

\newdir{|>}{!/4.5pt/@{|}*:(1,-.2)@^{>}*:(1,+.2)@_{>}}
\mathcode`\:="603A     
\mathcode`\<="4268     
\mathcode`\>="5269     
\mathchardef\gt="313E  
\mathchardef\lt="313C  
\mathchardef\colon="303A  

\newtheorem{theorem}{Theorem}[section]

\newtheorem{proposition}[theorem]{Proposition}
\theoremstyle{definition}
\newtheorem{definition}[theorem]{Definition}

\newtheorem{remark}[theorem]{Remark}
\newtheorem{remarks}[theorem]{Remarks}

\newcommand\fct[1][1.4em]{\tikz[baseline=-0.5ex, 
	shorten <=2pt, shorten >=2pt] \draw[->] (0,0) -- (#1,0);}
\def\Epsilon{\mathrm{E}}
\def\nameit#1{#1~}
\def\thx{\nameit{Theorem}}

\def\prx{\nameit{Proposition}}

\def\dfx{\nameit{Definition}}

\def\rmx{\nameit{Remark}}

\def\eg{\emph{e.g.}\xspace} \def\ie{\emph{i.e.}\xspace}

\def\dfn#1{\emph{#1}}

\edef\cdrestoreat{
	\noexpand\catcode\lq\noexpand\@=\the\catcode\lq\@}\catcode\lq\@=11

\def\opfont#1{{\operator@font{#1}}}
\def\opr#1{\mathord{{\opfont{#1}}}}

\def\cco{\@ifnextchar[{\CCO}{\CCO[6ex]}}
\cdrestoreat
\def\CCO[#1]#2#3#4#5{\ensuremath{(\xymatrix@1@=#1{#2\ar[r]|-{#3}&#5},#4)}}
\def\M{\ensuremath{\mathbf{M}}\xspace}
\def\N{\ensuremath{\mathbf{N}}\xspace}

\DeclareFontFamily{U}{min}{}
\DeclareFontShape{U}{min}{m}{n}{<-> udmj30}{}
\def\yo{\text{\!\usefont{U}{min}{m}{n}\symbol{'210}\!}}

\DeclareFontFamily{OT1}{pzc}{}
\DeclareFontShape{OT1}{pzc}{m}{it}{<->s*[1.14]pzcmi7t}{}
\DeclareMathAlphabet{\mathpzc}{OT1}{pzc}{m}{it}

\def\Yo{\ensuremath{\mathop{\yo}}\xspace}
\def\ct#1{\ensuremath{\mathpzc{#1}}}
\def\Ct#1{\ensuremath{\mathbf{#1}}}
\def\id#1{\ensuremath{\mathrm{id}_{#1}}}
\def\Id#1{\ensuremath{\mathrm{Id}_{#1}}}
\def\op{^{\textrm{\scriptsize op}}}

\def\vuoto{}
\def\ple#1{\relax\def\teste{#1}\relax
	\ensuremath{\left\langle\relax
		\ifx\teste\vuoto\relax{\,}\else{#1}\fi\relax
		\right\rangle}}

\def\pnt{\raisebox{.2ex}{\scalebox{1.25}{$\textstyle\int$}}\kern-.1ex}

\def\blank{\textrm{--}}
\def\xycar{\ar@{--)}}
\def\xycocar{\ar@{--+}}
\def\ntar{\ar|>{}|-*=0[@]{{\raisebox{.7ex}{\scalebox{.3}{$\bullet$}}}}}
\def\ntarb{\ar|>{}|-*=0[@]{{\raisebox{-1.1ex}{\scalebox{.3}{$\bullet$}}}}}

\def\ocn#1#2{\ensuremath{%
		\relax\ifcase#1\relax\or{\widehat{#2}}
		\else{\left(\widehat{#2}\right)}\fi}}

\def\isar{\ar|>{}|-*=0[@]{\widetilde{\kern1ex}}}
\def\isarb{\ar|>{}|-*=0[@]{\widetilde{\kern1ex}}}
\def\fuar{\ar@<-.2ex>}
\def\onar{\ar@{-{|>}}}
\def\oaar{\ar}
\def\moar{\ar@{>->}}
\def\emar{\ar@<-.2ex>@{^(->}}
\def\taar{\ar@{=>}}
\def\ntto{\xymatrix@1@C=1.3em{{}\ntar[r]&{}}}
\def\isto{\xymatrix@1@C=1.3em{{}\isar[r]&{}}}
\def\futo{\xymatrix@1@C=1.6em{{}\fuar[r]&{}}}
\def\onto{\xymatrix@1@C=1.3em{{}\onar[r]&{}}}
\def\oato{\xymatrix@1@C=1.3em{{}\oaar[r]&{}}}
\def\moto{\xymatrix@1@C=1.3em{{}\,\moar[r]&{}}}
\def\emto{\xymatrix@1@C=1.6em{{}\emar[r]&{}}}
\def\tato{\xymatrix@1@C=1.3em{{}\taar[r]&{}}}
\def\larr{\mathrel{\xymatrix@1@=3.5ex{*=0{}\ar[];[r]&*=0{}}}}
\def\to{\mathrel{\xymatrix@1@=2.5ex{*=0{}\ar[];[r]&*=0{}}}}


\begin{document}

\title{A comonad for Grothendieck fibrations}
\author[Emmenegger]{Jacopo~Emmenegger}
\address{DIMA, Universit\`a di Genova, via Dodecaneso
	35, 16146 Genova, Italy}
\email{emmenegger@dima.unige.it}
\author[Mesiti]{Luca~Mesiti}
\address{School of Mathematics, University of Leeds, Leeds
	LS2 9JT, United Kingdom}
\email{mmlme@leeds.ac.uk}
\author[Rosolini]{Giuseppe~Rosolini}
\address{DIMA, Universit\`a di Genova, via Dodecaneso 35,
	16146 Genova, Italy}
\email{rosolini@unige.it}
\author[Streicher]{Thomas~Streicher}
\address{Fachbereich Mathematik, TU Darmstadt,
	Schlossgartenstra\ss e 7, 64289 Darmstadt, Germany}
\email{streicher@mathematik.tu-darmstadt.de}
\keywords{Grothendieck fibration, lax idempotent monad}
\subjclass{18N45, 03G30, 18N10}

\begin{abstract}
	We prove that cloven Grothendieck fibrations over a fixed base $\ct{B}$ are the pseudo-coalgebras for a lax idempotent 2-comonad on $\ct{Cat}/\ct{B}$. We show this via an original observation that the known colax idempotent 2-monad for fibrations over a fixed base has a right 2-adjoint. As an important consequence, we obtain an original cofree construction of a fibration on a functor. We also give a new, conceptual proof of the fact that the forgetful 2-functor from split fibrations to cloven fibrations over a fixed base has both a left 2-adjoint and a right 2-adjoint, in terms of coherence phenomena of strictification of pseudo-(co)algebras. The 2-monad for fibrations yields the left splitting and the 2-comonad yields the right splitting. Moreover, we show that the constructions induced by these coherence theorems recover Giraud's explicit constructions of the left and the right splittings.
\end{abstract}

\maketitle

\section{Introduction}

{Grothendieck introduced fibrations in the late 1950s for the purpose of studying descent
	in algebraic geometry. Fibred categories, as fibrations are also called, were then studied,
	among others, by B\'enabou, \citet{GiraudJ:cohna}, \citet{GrayJ:fibacc}, and
	\citet{StreetR:fibayl}.
	B\'enabou developed the theory in the 1970s for the purpose of doing category theory
	over a general base topos and even more generally over categories with finite limits. 
	Unfortunately, most of the material is unpublished. Copies of original manuscripts are
	available online at \href{https://www2.mathematik.tu-darmstadt.de/~streicher/FibCatTexts}
	{{\tt www2.mathematik.tu-darmstadt.de/\~{ }streicher/FibCatTexts}},
	see also the notes by \citet{StreicherT:fibc}.
	Marta Bunge was a strong advocate for his approach, see \eg
	\citep{BungeM:stacme,BungeM:staeic,BungeM:psetsc}.}

{An important well-known result is that cloven fibrations over a fixed base $\ct{B}$ are
	the pseudo-algebras for a colax idempotent 2-monad \M on $\ct{Cat}/\ct{B}$. Strict
	algebras for such 2-monad correspond to split fibrations. (Co)lax idempotent 2-monads were
	introduced by Kock in his PhD thesis~\citep{KockA:thesis}, with examples coming from
	colimit completion processes. Z\"oberlein then studied the corresponding concept for
	pseudo-monads in his PhD thesis~\citep{Zoberlein:doc2c}. The idea of a (co)lax idempotent
	2-monad is to encode a property-like structure, see also \citep{KockA:monfws}. Indeed
	objects of the base can have at most one structure of pseudo-algebra for such 2-monads.}

The free fibration on a functor was firstly constructed by Gray. In
\citep[Theorem~3.9]{GrayJ:fibacc}, he produced a left adjoint to the forgetful
functor from the category $\ct{SpFib}(\ct{B})$ of split fibrations over $\ct{B}$ and
functors that preserve the cleavage on the nose into the slice category
$\ct{Cat}/\ct{B}$. The monadicity of this forgetful functor has then been known to some
extent for many years. A complete proof appeared recently in Chapter 9 of Johnson and
Yau's book \citeyearpar{Johnson:2dimc}. The algebraic character of fibrations over
functors was further clarified by Street, 
who introduced fibrations over a fixed base internal to a (suitable) 2-category
as pseudo-algebras for a colax idempotent 2-monad, which generalizes the one for
Grothendieck fibrations, see~\citep[pp.~118,~122]{StreetR:fibayl}
where opfibrations and fibrations are respectively called 0-fibrations and 1-fibrations.

{One of the main results of the paper is that cloven fibrations over a fixed base $\ct{B}$
	are also the pseudo-coalgebras for a lax idempotent 2-comonad on $\ct{Cat}/\ct{B}$
	(\thx\ref{mthm}). We prove this by observing that the underlying 2-functor $M$ of the
	2-monad \M can be expressed as the composition of two left adjoint 2-functors. So that $M$
	has a right 2-adjoint $N$. This observation is original. By an extension of the classical
	argument of \citet[Proposition~3.3]{Eilenberg:adjft}, we can conclude that $N$
	underlies a lax idempotent 2-comonad $\N$ whose pseudo-coalgebras are isomorphic to the
	pseudo-algebras for $\M$, and thus coincide with cloven fibrations. We also give an
	explicit description of the 2-comonad $\N$ for fibrations in \rmx\ref{remdescr}, obtained
	via mating calculus.}

{The comonadicity result for Grothendieck fibrations has many important
	consequences. Notably, it provides in particular an original cofree construction of a
	fibration on a functor (\thx\ref{theorcofreefibration}). This gives a right adjoint to the
	forgetful functor from the category $\ct{SpFib}(\ct{B})$ of split fibrations over $\ct{B}$
	and functors that preserve the cleavage on the nose into the the slice category
	$\ct{Cat}/\ct{B}$. Such right adjoint was devised by the second author in his MSc thesis
	under the supervision of the third author. We could not find previous traces of it in the
	literature.}

{Another main result is a new, conceptual proof of the fact that the
	forgetful 2-functor from split fibrations to cloven fibrations over a fixed base has both
	a left 2-adjoint and a right 2-adjoint (\thx\ref{theortwosplittings}). We deduce this from
	the monadicity and the comonadicity results for Grothendieck fibrations. Both explicit
	left adjoint and right adjoint splittings of fibrations were introduced in
	\citep[Th\'eor\`emes~2.4.2 et~2.4.4]{GiraudJ:cohna}, which is unfortunately hard to
	read. The right splitting, with a proof that the counit consists of equivalences, then
	appeared in B\'enabou's lectures in 1974 at Montreal and in 1980 at Louvain-la-Neuve, with
	a construction based on his ``fibered Yoneda lemma''. The left splitting was brought to
	public attention again in \citep{KapulkinK:simmufav}. The two
	adjoints gained interest as they provide two ways of turning a fibration into an
	equivalent split one: a description of this can be found
	in~\citep[Section~3]{StreicherT:fibc}.}

We prove that both left and right splittings coincide with coherence phenomena of
strictification of pseudo-(co)algebras, studied by \citet{Power:gencr} and
\citet{Lack:codoc}. The 2-monad $\M$ yields the left adjoint splitting as the left 
adjoint to the forgetful from strict algebras to pseudo-algebras. The 2-comonad $\N$
yields the right adjoint splitting as the right adjoint to the forgetful from strict
coalgebras to pseudo-coalgebras. 

{Moreover, we show that the recipes described in \citep{Lack:codoc} and
	\citep{Power:gencr} to strictify pseudo-(co)algebras recover the
	explicit constructions of the right and left splittings given by \citet{GiraudJ:cohna}
	(\rmx\ref{remrightsplitting} and \rmx\ref{remleftsplitting}). We believe this sheds new
	light on Giraud's explicit constructions.}

\subsection*{Outline of the paper}

In Section~\ref{sectionmonad}, we recall that cloven fibrations over a fixed base $\ct{B}$
are the pseudo-algebras for a colax idempotent 2-monad \M on $\ct{Cat}/\ct{B}$.   

In Section~\ref{sectioncomonad}, we prove that cloven fibrations over $\ct{B}$ are the
pseudo-coalgebras for a lax idempotent 2-comonad \N on $\ct{Cat}/\ct{B}$. We show this via
an original observation that the 2-monad \M has a right 2-adjoint. 

{In Section~\ref{sectioncofreefibration}, we show that the comonadicity result of
	Section~\ref{sectioncomonad} provides in particular an original cofree construction of a
	fibration on a functor.}

In Section~\ref{sectionsplittings}, we give a new, conceptual proof of the two splittings
of fibrations, in terms of coherence phenomena of strictification of
pseudo-(co)algebras. We also show that the constructions induced by these coherence
theorems recover Giraud's explicit construction of the left and right splitting. 

\subsection*{Acknowledgements}

We would like to acknowledge the two reviewers' very helpful suggestions. They were
extremely useful to redraft our original submission as well as to complement it with
references of which we were originally unaware.

\section{A colax idempotent monad for fibrations}\label{sectionmonad}

In this section, we recall that Grothendieck fibrations over a fixed base $\ct{B}$ are the
pseudo-algebras for a colax idempotent 2-monad on $\ct{Cat}/\ct{B}$. The result has been
known to some extent for many years. \citet[Theorem~3.9(ii)]{GrayJ:fibacc}
constructed a left adjoint to the inclusion into $\ct{Cat}/\ct{B}$ of its sub-category
$\ct{SpFib}(\ct{B})$ on split Grothendieck fibrations over \ct{B} and functors that
preserve the cleavage on the nose. It is quite straightforward to see that the canonical
comparison functor into the (strict) algebras for the monad generated by Gray's adjunction
is an isomorphism. By taking the 2-cells in $\ct{SpFib}(\ct{B})$ to be all 2-cells in
$\ct{Cat}/\ct{B}$, this upgrades to a 2-isomorphism between the 2-category of (strict)
algebras and (strict) algebra morphisms and the 2-category $\ct{SpFib}(\ct{B})$. One can
then see that the 2-category of pseudo-algebras is isomorphic to the 2-category
$\ct{Fib}(\ct{B})$ of cloven fibrations and functors that preserve the cleavage up to
isomorphism. 

{The monadicity result for fibrations played a crucial role in \citep{StreetR:fibayl},
	where fibrations in a 2-category are introduced precisely as the pseudo-algebras for a
	colax idempotent 2-monad which generalizes the one for Grothendieck fibrations. Recently,
	a complete proof of the monadicity result for fibrations appeared in Chapter 9 of
	\citep{Johnson:2dimc}.}

(Co)lax idempotent 2-monads were introduced by Kock in his PhD thesis~\citep{KockA:thesis},
with examples coming from colimit completion processes. Z\"oberlein then studied the
corresponding concept for pseudo-monads in his PhD thesis~\citep{Zoberlein:doc2c}. Another
useful reference is \citep{PowerA:reprfc}. 

\begin{definition}[Kock, Z\"oberlein]\label{defKZ}
	A 2-monad $\M=(M,\mu,\eta)$ on a 2-category $\ct{K}$ is called \dfn{colax idempotent}
	$($or \dfn{coKZ}$)$ if either of the following equivalent conditions holds: 
	\begin{enumerate}[label=\rm{(\roman*)}]
		\item the multiplication $\mu$ is right adjoint left inverse of $\eta M$; 
		\item any pseudo-algebra structure map $\alpha:M(K)\to K$ for $M$ is right adjoint left
		inverse of the unit $\eta_K$. 
	\end{enumerate}
\end{definition}

Notice that for a colax idempotent 2-monad $\M$ on $\ct{K}$, any object of $\ct{K}$ can
have at most one structure of pseudo-algebra for $\M$ up to isomorphism. In this sense, a
colax idempotent 2-monad encodes a property-like structure. 

\begin{remark}
	We would like to produce a free Grothendieck fibration on a functor
	$F:\ct{A}\futo\ct{B}$. So, for every object $a$ in \ct{A} and map $f:b\to F(a)$, we want
	to force 
	the existence of a cartesian lifting of $f$ to $a$. The idea is then to freely add all
	pairs $\cco{b}{f}{a}{F(a)}$, and thus to consider the comma category
	$\ct{B}/F$.\footnote{We shall often write an object in a comma category $\ct{B}/F$ as a
		pair \cco bfa{F(a)} instead of the appropriate triple
		$(b,a,\xymatrix@1@=6ex{b\ar[r]|-{f}&F(a)})$ when no confusion arises.} This can be
	compared for example to the construction of the free monoid on a set.
\end{remark}

\begin{proposition}[Gray, Street]\label{propdefmonad}
	The 2-functor $M:\ct{Cat}/\ct{B}\futo \ct{Cat}/\ct{B}$ that maps a functor
	$F:\ct{A}\futo\ct{B}$ into the functor on the left 
	in the comma object diagram in $\ct{Cat}$
	$$\xymatrix{\ct{B}/F\fuar[r]^{P_2}\fuar[d]_-{P_1}&\ct{A}\fuar[d]_{}="f"^-{F}\\
		\ct{B}\fuar[r]^{}="g"_-{\Id{\ct{B}}}&\ct{B}\ntar"g";"f"}$$
	extends to a colax idempotent 2-monad $\M=(M,\mu,\eta)$.
\end{proposition}
\begin{proof}
	By the universal property of the comma object, the identity natural transformation induces the unit $\eta_F:\ct{A}\futo \ct{B}/F$ over $\ct{B}$. Explicitly,
	$$\eta_F(a)=\cco{F(a)}{\id{}}{a}{F(a)}.$$
	Then the pasting
	$$\xymatrix{\ct{B}/{P_1}\fuar[r]^{P'_2}\fuar[d]_-{P'_1}&\ct{B}/F\fuar[r]^{P_2}\fuar[d]_{}="w"^-{P_1}&\ct{A}\fuar[d]_{}="f"^-{F}\\
		\ct{B}\fuar[r]^{}="q"_-{\Id{\ct{B}}}&
		\ct{B}\fuar[r]^{}="g"_-{\Id{\ct{B}}}\ntar"q";"w".&\ct{B}\ntar"g";"f".}$$
	induces the multiplication $\mu_F:\ct{B}/{P_1}\futo \ct{B}/F$ over $\ct{B}$. Explicitly,
	$$\mu_F(\cco{b_1}{f_1}{\cco{b_2}{f_2}{a}{F(a)}}{b_2})=\cco[8ex]{b_1}{f_2f_1}{a}{F(a)}$$
	and it acts on the arrows by pasting the two commutative squares.
	
	Using the universal property of the comma object, it is straightforward to prove that $(M,\mu,\eta)$ is a colax idempotent 2-monad. 
	
	Explicitly, $\eta M\dashv \mu$ with the unit of the adjunction being the identity and counit with component $u_F:\eta_{M(F)}\mu_F\ntto \Id{M(MF)}$ the natural transformation given by the family of arrows in $\ct{B}/M(MF)$,
	which component at the index \cco{b_1}{f_1}{\cco{b_2}{f_2}{a}{F(a)}}{b_2} is
	$$\xymatrix{
		b_1\ar[d]_-{\id{b_1}}\ar[r]^-{\id{b_1}}&b_1\ar[d]_-{f_1}&
		&b_1\ar[d]^-{f_1}\ar[r]^-{f_2 f_1}&F(a)\ar[d]^-{\id{F(a)}}\\
		b_1\ar[r]_-{f_1}&b_2&
		&b_2\ar[r]_-{f_2}&F(a)}$$
	from \cco{b_1}{\id{}}{\cco[8ex]{b_1}{f_2 f_1}{a}{F(a)}}{F(a)} to \cco{b_1}{f_1}{\cco{b_2}{f_2}{a}{F(a)}}{b_2}.
\end{proof}

\begin{remark}
	Clearly the functor underlying the 2-monad $\M$ is polynomial, see\linebreak[4] \citep{GambinoN:polfap}.
\end{remark}

\begin{remark}
	The 2-monad $\M$ extends to a colax idempotent 2-monad on the 2-category $\ct{Cat}^2$ of
	arrows of $\ct{Cat}$, that applies $\ct{Cat}/B$ into itself and commutes with
	change-of-base functors. However, such extension does not seem to work well with the rest
	of the theory. See for example \rmx\ref{remstreetfibrations} and
	\rmx\ref{essentialtorestrictoverbase}.
\end{remark}

The following result follows from the theory of fibrations internal to a 2-category as
developed by \citet{StreetR:fibayl,StreetR:fibbic}.
The argument we present is based on the well-known fact that
fibrations can be characterized as those functors $F$
such that the unit of \M on $F$ has a right adjoint in $\ct{Cat}/B$,
see for instance~\citep[Theorem~2.7]{Weber:yostr2top}.
This is an instance of the general phenomenon observed by \citet{KockA:monfws},
with the caveat that Kock only considers left adjoint right inverses (\emph{lari\/}) to
the unit since he is working with \emph{lax}-idempotent monads (hence the left instead of
right), and \emph{normal} pseudo-algebras (hence the right inverse).
About the latter observation,
see also the discussion in \citep[p.~120]{StreetR:fibayl} following the proof of
Proposition~9.

{As we already recalled at the beginning of the section, \citet{Johnson:2dimc} devote
	Chapter~9 to a detailed proof of the following theorem.}

\begin{theorem}[Street, Johnson--Yau]\label{cps}
	The $2$-category of pseudo-algebras for the colax idempotent 2-monad $\M$ on
	$\ct{Cat}/\ct{B}$ is isomorphic to the $2$-category $\ct{Fib}(\ct{B})$ of cloven
	Grothendieck fibrations over \ct{B} and functors that preserve the cleavage up to
	isomorphism.
	
	The 2-category of strict algebras for $\M$ is isomorphic to the 2-category
	$\ct{SpFib}(\ct{B})$ of split Grothendieck fibrations over \ct{B} and functors that
	preserve the cleavage on the nose. 
\end{theorem}
\begin{proof}
	By the characterization theorem for pseudo-algebras for a colax idempotent monad of
	\citep{KockA:monfws} (see also \dfx\ref{defKZ}), a functor $F:\ct{A}\futo\ct{B}$
	sustains a structure of pseudo-algebra for $\M$ if and only if the unit 
	$$\xymatrix{\ct{A}\ar[r]^-{\eta_F}\ar[d]_-{F}&\ct{B}/F\ar[d]^-{M(F)}\\
		\ct{B}\ar[r]_-{\Id{\ct{B}}}&\ct{B}
	}$$
	has a right adjoint $\alpha$ in $\ct{Cat}/\ct{B}$.\footnote{Indeed,
		\citet{KockA:monfws} shows just a bijection between the objects.}
	For every $a$ in \ct{A} and $f:b\to F(a)$ in $\ct{B}$, the object
	$\alpha\cco{b}{f}{F(a)}{a}$ is over $b$ and the component of the counit of $\eta_F\dashv
	\alpha$ on $\cco{b}{f}{F(a)}{a}$ is a lifting of $f$ to $a$, since the counit is
	vertical. The couniversality of the counit precisely translates as such lifting being
	cartesian. 
	
	The strict axioms for strict algebras translates as the cloven fibration being split.
\end{proof}

\begin{remarks}\label{remstreetfibrations}
	(a) Recall from \citep{StreetR:fibbic} that a functor $F :\ct{A} \to \ct{B}$ is a
	\dfn{Street fibration} if, for every object $a$ in $\ct{A}$ and arrow $f:b \to F(a)$,
	there are an isomorphism $h_a:b_a \to b$
	and a cartesian arrow into $a$ over the composite $fh_a : b_a \to F(a)$. Street fibrations
	can be characterized as pseudo-algebras for the ``same'' monad \M of Grothendieck
	fibrations, but lifted to the pseudo-fibre $\ct{Cat}/\!/\ct{B}$, see
	\citep{StreetR:fibbic}. 
	The 2-category $\ct{Cat}/\!/\ct{B}$ has the same objects as $\ct{Cat}/\ct{B}$,
	1-cells are triangles filled with a natural isomorphism,
	and 2-cells are those 2-cells in $\ct{Cat}/\ct{B}$ which commute with the two natural
	isomorphisms. 
	For the characterization of Street fibrations,
	it is enough to observe that the underlying functor of \M lifts,
	and that unit and counit are still natural.
	The rest of the proof works the same as for \prx\ref{cps}.
	
	\noindent(b)
	As \M is a monad also on the whole 2-category $\ct{Cat}^2$,
	we can also look at the pseudo-algebras there. It is easy to see that the strict algebras
	are again split Grothendieck fibrations. Following \citet{KockA:monfws}, 
	pseudo-algebras for \M on $\ct{Cat}^2$ can also be characterized as those functors $F$
	such that the unit $(\Id{},\eta_F)$ has a right adjoint in $\ct{Cat}^2$. Let
	$$\xymatrix{\ct{B}/F\ar[r]^-{A}\ar[d]_-{F}&\ct{A}\ar[d]^-{M(F)}\\
		\ct{B}\ar[r]^-{B}&\ct{B}}$$
	be such a right adjoint.
	In particular,
	the functor $B$ is isomorphic to the identity on $\ct{B}$ via an isomorphism $\zeta:B\to\Id{\ct{B}}$.
	Unfolding the other conditions
	one sees that for every object $a$ in \ct{A} and arrow $f:b \to F(a)$,
	there is a cartesian arrow into $a$ over $f\zeta_b:B(b) \to F(a)$.
	It follows that a pseudo-algebra for \M in $\ct{Cat}^2$ is a Street fibration.
	However, the converse does not seem to hold.
	Indeed, in both cases, given $a$ in \ct{A} and $f:b\to F(a)$,
	we can choose an isomorphism $h$ such that the composite $fh$ has a cartesian lift to $a$.
	But in a Street fibration the choice of $h$ depends on the object $a$,
	whereas in a pseudo-algebra in $\ct{Cat}^2$ the choice is given uniformly for every object $a$ by the isomorphism $\zeta_b$.
	
	Notice also that a normal pseudo-algebra in $\ct{Cat}^2$, being a right adjoint right inverse to the unit,
	is in particular an arrow in $\ct{Cat}/\ct{B}$ and, in fact, a  right adjoint right inverse to the unit in $\ct{Cat}/\ct{B}$.
	It follows that a normal pseudo-algebra in $\ct{Cat}^2$ is a normal Grothendieck fibration.
	This fact suggests that (normal) Street fibrations are strictly more general than pseudo-algebras in $\ct{Cat}^2$.
\end{remarks}

\section{A lax idempotent comonad for fibrations}\label{sectioncomonad}

In this section, we prove an original result of comonadicity for Grothendieck
fibrations. More precisely, we prove that fibrations over a fixed base $\ct{B}$ are also
the pseudo-coalgebras for a lax idempotent 2-comonad on $\ct{Cat}/B$. We show this by an
original observation that the 2-monad $\M$ has a right 2-adjoint. 

We will see in the following sections that the comonadicity theorem has important
consequences. It provides in particular an original cofree construction of a fibration on
a functor (\thx\ref{theorcofreefibration}). Moreover the monad and the comonad induce
respectively the left splitting and the right splitting of fibrations
(\thx\ref{theortwosplittings}, \rmx\ref{remrightsplitting},
\rmx\ref{remleftsplitting}). The comonadicity of fibrations has also consequences for
(higher) elementary topos theory. 

To reach the comonadicity result for Grothendieck fibrations, we notice the following
useful equivalent description of the 2-monad. 

\begin{proposition}\label{propcomp}
	The 2-functor underlying the 2-monad $\M$ coincides with the composition
	$$\xymatrix@C=5em{\ct{Cat}/\ct{B}\fuar[r]^-{\opr{cod}^*}&
		\ct{Cat}/\ct{B}^{\ct{2}}\fuar[r]^-{\opr{dom}_\bullet}&\ct{Cat}/\ct{B}}$$
	of the 2-functor $\opr{cod}^*$ that calculates pullbacks along $\opr{cod}:\ct{B}^2\futo \ct{B}$ and the 2-functor $\opr{dom}_\bullet$ of postcomposition with $\opr{dom}:\ct{B}^2\futo \ct{B}$.
\end{proposition}
\begin{proof}
	The comma object in $\ct{Cat}$
	$$\xymatrix{\ct{B}/F\fuar[r]^{P_1}\fuar[d]_-{P_2}& \ct{B} \fuar[d]_{}="g"^-{\Id{\ct{B}}}\\
		\ct{A}\fuar[r]^{}="f"_-{F}&\ct{B}\ntarb"g";"f".}$$
	from $\Id{\ct{B}}$ to $F$ is equivalent to the pullback of $F$ along the lax limit of the
	arrow $\Id{\ct{B}}$ (that acts as a replacement):
	$$\xymatrix{
		\ct{B}/{F} \fuar[r]^{}\fuar[d]_-{P_2} &\ct{B}^2\fuar[r]^{\opr{dom}}\fuar[d]_{}="w"_-{\opr{cod}}&\ct{B}\fuar[d]_{}="f"^-{\Id{\ct{B}}}\\
		\ct{A}\fuar[r]^{}="q"_-{F}&
		\ct{B}\fuar[r]^{}="g"_-{\Id{\ct{B}}}&\ct{B}\ntarb"f";"g".}$$
\end{proof}

\begin{remark}
	The 2-monad $\M$ can thus be expressed as the composition of left adjoint 2-functors. This
	observation is original.  
	
	The adjunction $\opr{dom}_\bullet\dashv\opr{dom}^*$ is the usual change of base.
	And the functor $\opr{cod}^*$ has indeed a right 2-adjoint $\opr{cod}_*$ because
	$\opr{cod}$ is an opfibration. Hence it is 2-exponentiable as stated in
	\citet[Th\'eor\`eme~4.4]{GiraudJ:metdld} and 
	in \citet[2nd Proposition on p.~894]{ConducheF:sujdea},  
	but for an explicit proof of the characterisation of exponentiable functors, see
	\citet{StreetR:powf} where such functors are called \emph{powerful}. See also
	\citet[Theorem~2.16]{StreetR:comfat} for the extension to the case of internal categories
	in a cartesian closed category with pullbacks.
\end{remark}

\begin{proposition}\label{propmhasrightadj}
	The 2-functor underlying the 2-monad $\M$ has a right 2-adjoint $N$, expressed as the
	composite 
	$$\xymatrix@C=5em{\ct{Cat}/\ct{B}\fuar[r]^-{\opr{dom}^*}&
		\ct{Cat}/\ct{B}^{\ct{2}}\fuar[r]^-{\opr{cod}_*}&\ct{Cat}/\ct{B}}$$
	Therefore $N$ underlies a lax idempotent 2-comonad $\N=(N,\mu',\eta')$ on
	$\ct{Cat}/\ct{B}$.
\end{proposition}
\begin{proof}
	The mating calculus ensures that the double category of left adjoints in a
	2-category is isomorphic to the double category of right adjoints, see
	\citep{Kelly:reveltwocat}. It follows that the right 2-adjoint of a colax idempotent
	2-monad underlies a lax idempotent 2-comonad. 
\end{proof}

{For 1-dimensional monads, \citet{Eilenberg:adjft} showed that the coalgebras for the
	right adjoint of a monad coincide with the algebras for the monad. \citet{LaudaA:frobalg}
	then proved that this works for pseudo-monads as well, obtaining a 2-equivalence between
	the pseudo-coalgebras and the pseudo-algebras. In the present case, we can prove the
	following stricter result.}

\begin{theorem}\label{mthm}
	The $2$-category of pseudo-coalgebras for the lax idempotent 2-comonad $\N$ on
	$\ct{Cat}/\ct{B}$ is isomorphic to the $2$-category $\ct{Fib}(\ct{B})$ of cloven
	Grothendieck fibrations over $\ct{B}$ and functors that preserve the cleavage up to
	isomorphism.  
	
	The 2-category of strict coalgebras for $\N$ is isomorphic to the 2-category
	$\ct{SpFib}(\ct{B})$ of split Grothendieck fibrations over $\ct{B}$ and functors that
	preserve the cleavage on the nose. 
\end{theorem}
\begin{proof}
	The isomorphism between the double category of left adjoints in a
	2-category and the double category of right adjoints \citep{Kelly:reveltwocat} (via mating
	calculus) transforms the 2-category of pseudo-algebras for 
	\M into the the 2-category of pseudo-coalgebras for \N. Indeed, \citet{LaudaA:frobalg} showed
	that a right pseudo-adjoint to a pseudo-monad in a Gray-category
	is a pseudo-comonad with pseudo-coalgebras 2-equivalent to the pseudo-algebras of the
	pseudo-monad. We notice that, when the pseudo-adjunction is a strict 2-adjunction and the
	interchange rule is strict, the 2-equivalence becomes a 2-isomorphism. We can then
	conclude by \thx\ref{cps}. 
\end{proof}

\begin{remark}\label{essentialtorestrictoverbase}
	For the comonadicity of fibrations, it is essential to restrict to fibrations over a fixed base $\ct{B}$. The factorization of \prx\ref{propcomp} indeed only holds for the $2$-monad restricted to $\ct{Cat}/\ct{B}$.
\end{remark}

\begin{remark}\label{remdescr}
	We can calculate the 2-comonad $\N=(\opr{cod}_*\opr{dom}^*,\mu',\eta')$ more
	explicitly. Indeed, as observed by \citet{GiraudJ:metdld}, the isomorphism
	between homsets that gives the adjunction $\opr{cod}^*\dashv\opr{cod}_*$ 
	determines the explicit definition of $\opr{cod}_*$, up to isomorphism. Notice that
	$\opr{dom}^*$ sends every functor $F:\ct{A}\futo\ct{B}$ to the forgetful $F/\ct{B}\futo
	\ct{B}^{\ct{2}}$. Since the fibre of $\opr{cod}$ over $b$ in \ct{B} is $\ct{B}/b$, we find
	that $N(F)$ is the projection on the first component $\ct{G}_F\futo \ct{B}$, where
	$\ct{G}_F$ is the category defined as follows. Its objects are pairs \ple{b,X} where $b$
	is an object in \ct{B} and 
	$X$ is a functor such that
	$$\begin{tikzcd}
		\ct{B}/b\ar{rr}{X}\ar{rd}[']{\partial_0}&&\ct{A}\ar{ld}{F}\\
		&\ct{B}
	\end{tikzcd}$$
	commutes. An arrow \begin{tikzcd}[column sep=2em]
		\ple{f,\alpha}:\ple{b,X}\ar{r}&\ple{b',X'}\end{tikzcd}
	consists of an arrow \begin{tikzcd}[column sep=2em]f:b\ar{r}&b'\end{tikzcd} in \ct{B}
	and a natural transformation
	$$\begin{tikzcd}
		\ct{B}/b\ar{rr}{X}[name=a,']{}\ar{rd}[']{f\circ_{\ct{B}}\blank}&&\ct{A}\\
		&\ct{B}/b'\ar{ru}[name=b]{}[']{X'}&
		\ar["\scalebox{.3}{$\bullet$}"{sloped,anchor=south},from=a,to=b]{}[']{\alpha}
	\end{tikzcd}$$
	with vertical components.
	
	\noindent $N$ then sends every morphism $H:F\to F'$ in $\ct{Cat}/\ct{B}$ to
	$\ple{\Id{},H\circ -}:\ct{G}_F\futo\ct{G}_{F'}$, which is over $\ct{B}$. And the action of
	$N$ on 2-cells is analogous. 
	
	By the mating calculus, the counit $\eta'$ of the 2-comonad $\N$ is the composite
	$$\opr{cod}_*\opr{dom}^*\xrightarrow[\eta\hspace{0.2ex} \opr{cod}_*\opr{dom}^*]{\scalebox{.3}{$\bullet$}}\opr{dom}_\bullet \opr{cod}^* \opr{cod}_*\opr{dom}^*\xrightarrow[\opr{dom}_\bullet \hspace{0.2ex} \xi \hspace{0.2ex} \opr{dom}^*]{\scalebox{.3}{$\bullet$}}\opr{dom}_\bullet \opr{dom}^*\xrightarrow[\zeta]{\scalebox{.3}{$\bullet$}}\id{},$$
	where $\xi$ is the counit of $\opr{cod}^*\dashv\opr{cod}_*$, whose components are evaluation functors, and $\zeta$ is the counit of $\opr{dom}_\bullet\dashv\opr{dom}^*$. Given a functor $F:\ct{A}\futo\ct{B}$, the component of $\eta'$ on $F$ is then given by ``evaluating at the identity''
	$$\begin{tikzcd}[row sep=.1ex,column sep=4em]
		\ct{G}_F\ar{r}{\Epsilon}&\ct{A}\\
		\ple{b,X}\ar{dd}[name=a]{\ple{f,\alpha}}\ar[mapsto]{r}&
		X(\id{b})\ar{dd}{X'(f:f\xrightarrow{}\id{b'})\circ\alpha_{\id{b}}}[name=b,']{}\\\strut\\
		\ple{b',X'}\ar[mapsto]{r}&X'(\id{b'})\ar[mapsto,from=a,to=b]\\
		X(\id{b})\ar{r}{\alpha_{\id{b}}}&X'(f)
	\end{tikzcd}$$
	
	The comultiplication $\mu'$ of the 2-comonad $\N$ has component $\mu'_F$ on $F:\ct{A}\futo \ct{B}$ given by the functor $\ct{G}_F\futo \ct{G}_{N(F)}$ that sends $\ple{b,X}$ to $\ple{b,\overline{X})}$ with
	$$\begin{tikzcd}[row sep=.1ex,column sep=4em]
		\ct{B}/b\ar{r}{\overline{X}}&\ct{G}_F\\
		(h:b'\rightarrow b)\ar[mapsto]{r}&
		\ple{b',X\circ(h\circ -)}
	\end{tikzcd}$$
	After \rmx\ref{remcoalgstructoffib}, we will be able to see $\mu'_F$ as the coalgebra structure map of the split fibration $N(F)$.
\end{remark}

{One could also use the isomorphism between homsets, given by the adjunction
	$M\dashv N$, as guaranteed by \prx\ref{propmhasrightadj}, to determine $N$ directly as
	follows.}

Given $b$ in \ct{B}, write $\Yo(b):{\ct{B}}/b\fct\ct{B}$
for the split fibration given by the (restriction of the) domain functor from the comma
category $\ct{B}/b$ to \ct{B}. That assignment extends to a functor
$$\begin{tikzcd}[row sep=.1ex,column sep=4em]
	\ct{B}\ar{r}{\Yo}&\opr{SpFib}(\ct{B})\\
	b_1\ar[mapsto]{r}\ar{dd}[name=f]{f}&
	\ct{B}/b_1\ar{dd}[name=g,']{f\circ_{\ct{B}}\blank}\\\strut \\
	b_2\ar[mapsto]{r}&\ct{B}/b_2\ar[mapsto,from=f,to=g]
\end{tikzcd}$$

\begin{proposition}\label{propcomonadisgrothconstr}
	Fix a functor $F:\ct{A}\fct\ct{B}$ in $\Ct{Cat}/\ct{B}$. Homming from each $\Yo(b)$ into
	$F$ in the comma 2-category $\Ct{Cat}/\ct{B}$
	$$\begin{tikzcd}[row sep=.1ex,column sep=4em]
		\ct{B}\op\ar{r}{\widehat{F}}&\Ct{Cat}\\
		b\ar{dd}[name=a]{f}[near end,']{\ct{B}\op}\ar[mapsto]{r}&
		(\Ct{Cat}/\ct{B})(\Yo(b),F)\ar{dd}[name=b,']{\blank\circ_{\Ct{Cat}/\ct{B}}\Yo(f)}\\\strut\\
		b'\ar[mapsto]{r}&(\Ct{Cat}/\ct{B})(\Yo(b'),F)\ar[mapsto,from=a,to=b]
	\end{tikzcd}$$
	gives a strict indexed category over $\ct{B}$, and its fibration of points
	$$\pnt\widehat{F}:\ct{G}_F\futo\ct{B}$$
	is precisely $N(F)$.
\end{proposition}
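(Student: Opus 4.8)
The plan is to unwind both sides of the claimed equality and exhibit a natural isomorphism between the category $(\Ct{Cat}/\ct{B})(\Yo(b),F)$ and the fibre of $N(F)$ over $b$, then check that reindexing matches on both sides, and finally invoke the general fact that the fibration of points (the Grothendieck construction) of an honest $\ct{B}\op$-indexed category is split. The work is essentially a comparison of two descriptions of the same data, so the proof is a matter of identifying the right bijections and verifying they are natural and functorial.

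First I would compute the fibre $\widehat{F}(b)=(\Ct{Cat}/\ct{B})(\Yo(b),F)$ explicitly. An object is a $1$-cell in $\Ct{Cat}/\ct{B}$ from $\Yo(b)=\partial_0\colon\ct{B}/b\to\ct{B}$ to $F\colon\ct{A}\to\ct{B}$, that is, a functor $X\colon\ct{B}/b\to\ct{A}$ with $F\circ X=\partial_0$; a morphism is a $2$-cell in $\Ct{Cat}/\ct{B}$, i.e.\ a natural transformation $X\Rightarrow X'$ whose image under $F$ is the identity, equivalently a natural transformation with vertical components. This is exactly the description of the objects $\ple{b,X}$ and the vertical part of the arrows $\ple{f,\alpha}$ in $\ct{G}_F$ given in the preceding remark. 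So, over a fixed $b$, the match is immediate on the nose. Next I would check that reindexing along $f\colon b\to b'$ agrees: on the right-hand side it is precomposition $\blank\circ_{\Ct{Cat}/\ct{B}}\Yo(f)$ with $\Yo(f)=f\circ_{\ct{B}}\blank\colon\ct{B}/b\to\ct{B}/b'$, and on the $\ct{G}_F$ side it is exactly the recipe $f\circ_{\ct{B}}\blank$ used to compose the triangle defining an arrow; so the two indexed categories have identical reindexing functors, hence are equal (not merely isomorphic). Then the Grothendieck construction $\pnt\widehat{F}$ produces a category whose objects are pairs $\ple{b,X}$ and whose arrows are pairs $\ple{f,\alpha}$ with $\alpha$ a natural transformation over $f$, which is precisely $\ct{G}_F$ with projection $N(F)$ to $\ct{B}$.

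The one genuinely conceptual input is the identification of $(\Ct{Cat}/\ct{B})(\Yo(b),F)$ with $N(F)$-over-$b$ as coming from the adjunction $M\dashv N$: by the Yoneda-style argument, $N(F)$ is determined up to isomorphism by the homsets $(\Ct{Cat}/\ct{B})(\Yo(b),N(F))\cong(\Ct{Cat}/\ct{B})(M(\Yo(b)),F)$, and one computes $M(\Yo(b))\simeq\Yo(b)$ (the monad $M$ fixes representable-like objects because $\ct{B}/b\to\ct{B}$ is already the universal thing $M$ builds), so that $(\Ct{Cat}/\ct{B})(\Yo(b),N(F))\cong(\Ct{Cat}/\ct{B})(\Yo(b),F)=\widehat{F}(b)$. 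Matching this against the explicit fibres of $N(F)$ computed in the earlier remark is what pins down the isomorphism. I expect the main obstacle to be bookkeeping: verifying that the bijection on homsets is genuinely $2$-natural and that it respects the reindexing actions strictly, rather than merely up to coherent isomorphism, so that one obtains an honest $\ct{B}\op$-indexed category and not merely a pseudofunctor.

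Finally, once $\widehat{F}$ is exhibited as a strict functor $\ct{B}\op\to\Ct{Cat}$, the consequence that $N(F)$ is split is automatic: the Grothendieck construction of a strict indexed category is always a split fibration, with the canonical cleavage given by the reindexing functors and its splitting inherited from the strict functoriality $\widehat{F}(gf)=\widehat{F}(f)\circ\widehat{F}(g)$. I would state this as the concluding sentence, noting that strictness of $\widehat{F}$ (which follows from strict associativity of composition in $\ct{B}$ and in $\Ct{Cat}/\ct{B}$) is exactly what makes the cleavage split.
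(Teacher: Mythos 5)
Your first two paragraphs and your closing paragraph are correct, and together they already constitute a complete proof along the paper's intended lines: the proposition sits immediately after the remark computing $N(F)$ explicitly as the projection $\ct{G}_F\to\ct{B}$, so all that has to be checked is that the Grothendieck construction $\pnt\widehat{F}$ reproduces literally that category with that projection (objects $\ple{b,X}$, arrows $\ple{f,\alpha}$ with $\alpha$ vertical over the reindexing $f\circ_{\ct{B}}\blank$), and that the fibration of points of a strict $2$-functor $\ct{B}\op\to\Ct{Cat}$ is split. Your identification of fibres, reindexings, and the splitness conclusion all check out.

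However, the paragraph you call ``the one genuinely conceptual input'' is wrong on two counts. First, $(\Ct{Cat}/\ct{B})(\Yo(b),N(F))$ is \emph{not} the fibre of $N(F)$ over $b$: its objects are all functors $\ct{B}/b\to\ct{G}_F$ over $\ct{B}$, which is far more data than a single object of $\ct{G}_F$ lying over $b$. Second, the claim $M(\Yo(b))\simeq\Yo(b)$ is false: $M(\Yo(b))$ is the category of composable pairs $c\to x\to b$, projecting to $c$, and the comparison functors (the unit $\eta_{\Yo(b)}$, and composition $(c,g,f)\mapsto(c,fg)$) form an \emph{adjunction}, not an equivalence --- this adjunction is precisely the pseudo-algebra structure that the colax idempotent monad $\M$ bestows on the fibration $\Yo(b)$, and its counit, with component $(c,\id{c},fg)\to(c,g,f)$, is invertible only when $g$ is. Consequently, homming into $F$ yields only an adjunction between $(\Ct{Cat}/\ct{B})(M\Yo(b),F)$ and $(\Ct{Cat}/\ct{B})(\Yo(b),F)$, not the isomorphism your argument requires.

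The correct way to run the adjunction argument is to hom out of the \emph{point} rather than out of $\Yo(b)$: for $b:\ct{1}\to\ct{B}$ one has $M(b)=\Yo(b)$ on the nose (the comma category of $\Id{\ct{B}}$ over $b$ is the slice $\ct{B}/b$), so $M\dashv N$ gives
$$(\Ct{Cat}/\ct{B})(b,N(F))\;\cong\;(\Ct{Cat}/\ct{B})(M(b),F)\;=\;(\Ct{Cat}/\ct{B})(\Yo(b),F)\;=\;\widehat{F}(b),$$
and the left-hand side is genuinely the fibre of $N(F)$ over $b$ (objects of $\ct{G}_F$ over $b$, vertical arrows). Even then, this pointwise computation only pins down the fibres; recovering the reindexing, hence the splitting, requires either a Yoneda argument over all objects of $\Ct{Cat}/\ct{B}$ or the explicit comparison you already carried out. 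The simplest repair is therefore to delete that paragraph entirely and let your direct verification, which is self-contained, carry the proof.
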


{Note that it follows directly from \prx\ref{propcomonadisgrothconstr} that $N(F)$ is
	split, which we know already from \thx\ref{mthm} as $N(F)$ is a cofree strict coalgebra.} 

\begin{remark}\label{remcoalgstructoffib}
	The pseudo-coalgebra structure map of a cloven fibration $p:\ct{E}\futo\ct{B}$ is,
	explicitly,
	$$\begin{tikzcd}
		\ct{E}\ar{rr}{\alpha}\ar{rd}[']{p}&&\ct{G}_F\ar{ld}[{inner sep=0.3ex}]{N(p)}\\
		&\ct{B}
	\end{tikzcd}$$
	where $\alpha$ sends $E$ in \ct{E} to the triangle
	$$\begin{tikzcd}[row sep=4.75ex,column sep=4.75ex]
		\ct{B}/{p(E)}\ar{rr}{(-)^* E}\ar{rd}[']{\partial_0}&&\ct{E}\ar{ld}{p}\\
		&\ct{B}
	\end{tikzcd}$$
	with $(-)^* E$ calculating the domains of the chosen cartesian liftings to $E$, and is
	extended by cartesianity to a functor $\alpha:\ct{E}\futo \ct{G}_F$. 
	
	The rest of the pseudo-coalgebra structure of $p$ is given by the isomorphisms
	$\alpha_{\eta'}$ and $\alpha_{\mu'}$ that regulate respectively liftings in the chosen
	cleavage of $p$ of an identity and of a composite. 
	
	{The well-known observation that the component at $F$ of the comultiplication $\mu'_F$ of
		the comonad $N$ is precisely the coalgebra structure map of the split fibration
		$N(F)$ translates, in the notation of \rmx\ref{remdescr}, as}
	$$\overline{X}=(-)^* \ple{b,X}.$$
	That is, $\overline{X}$ shows the explicit chosen cartesian liftings for the split
	fibration $N(F)$, which can be read from \prx\ref{propcomonadisgrothconstr}.
\end{remark}

\section{The cofree fibration on a functor}\label{sectioncofreefibration}

{An important consequence of the comonadicity result is the fact that the forgetful
	2-functor from the 2-category $\ct{SpFib}(\ct{B})$ of split fibrations over a fixed
	$\ct{B}$ to $\ct{Cat}/\ct{B}$ also has a right adjoint. This result seems not to appear in
	the literature.}

\begin{theorem}\label{theorcofreefibration}
	
	{The forgetful 2-functor $U:\ct{SpFib}(\ct{B})\futo \ct{Cat}/\ct{B}$ has a right
		2-adjoint. The cofree fibration on a functor $F:\ct{A}\futo \ct{B}$ is the split fibration
		$N(F):\ct{G}_F\futo\ct{B}$, described in \rmx\ref{remdescr} and
		\prx\ref{propcomonadisgrothconstr}.}
\end{theorem}

{\begin{proof}
		The right $2$-adjoint to $U$ is given by the forgetful-cofree adjunction 
		given by the 2-comonad $\N$. By \thx\ref{mthm}, the strict coalgebras for the 2-comonad
		$\N$ are precisely the split fibrations.
\end{proof}}

\begin{remark}
	
	{The fact that $N$ gives a right adjoint to $U$ could also be proved directly, showing
		that the counit $\eta'$ is 2-universal. The component on $p$ of the unit of the adjunction
		is given by the coalgebra structure map of the split fibration $p$, described in
		\rmx\ref{remcoalgstructoffib}. However a direct proof is considerably more involved than
		the one that just follows from the factorization of the 2-monad as the composite of two
		left adjoint 2-functors. It is indeed much easier to calculate the right adjoint to the
		monad \M than the right adjoint to $U$.}
\end{remark}

\begin{remark}
	
	{We briefly describe a strategy for a direct proof of the comonadicity of $U$.}
	
	{In order to prove that a coalgebra $(F:\ct{A}\futo \ct{B},\alpha:F\fct N(F))$ for the
		2-comonad $\N$ is a split fibration, start from a diagram}
	$$\begin{tikzcd}
		& a \arrow[d,mapsto,"{F}"]\\
		b \arrow[r,"{f}"'] & F(a)
	\end{tikzcd}$$
	{and construct a cartesian lifting of $f$ to $a$. The idea is to apply $\alpha$ to such
		a diagram and compute the cartesian lifting with respect to the split fibration $N(F)$ of
		$f$ to $\alpha(a)=\ple{F(a),X:\ct{B}/(F(a))\futo \ct{A}}$. Next, applying the counit
		$\eta'_F$ recovers the starting data of the diagram above and exhibits a lifting of $f$ to
		$a$:}
	$$\begin{tikzcd}[column sep=8ex]
		\eta'_F(\ple{b,X\circ (f\circ-)})
		\arrow[r,"{\eta'_F(\ple{f,\id{}})}"]\arrow[d,mapsto,"{F}"]& a
		\arrow[d,mapsto,"{F}"]\\ 
		b \arrow[r,"{f}"'] & F(a)
	\end{tikzcd}$$
	{Note that, by \rmx\ref{remdescr},
		$$\eta'_F(\ple{b,X\circ (f\circ -)})=X(f).$$
		One is left with proving directly that such a lifting of $f$ to $a$ is cartesian, whose
		proof is lengthy. The proof of \thx\ref{mthm} uses instead what was already known
		for the 2-monad $\M$.}
\end{remark}

\section{Recovering the two ways to split a fibration}\label{sectionsplittings}

In this section, we present a new, conceptual proof of the fact that the forgetful
2-functor from split fibrations to cloven fibrations over a fixed base has both a left
2-adjoint and a right 2-adjoint. Both explicit left and right splittings of fibrations
were introduced in \citep[I.2.4]{GiraudJ:cohna}.

{We prove that both left and right splittings coincide with coherence phenomena of
	strictification of pseudo-(co)algebras. We show that such strictification adjoints are
	guaranteed for the monad \M and the comonad \N by the theorems of \citet{Lack:codoc}. The
	2-monad $\M$ yields the left adjoint splitting as the left adjoint to the forgetful from
	strict algebras to pseudo-algebras. The 2-comonad $\N$ yields the right adjoint splitting
	as the right adjoint to the forgetful from strict coalgebras to pseudo-coalgebras.}

{Moreover, we show that the recipes described in \citep{Lack:codoc,Power:gencr} to
	strictify pseudo-(co)algebras concretely, recover the explicit constructions of right
	and left splittings given by \citet{GiraudJ:cohna}. This sheds new light on
	Giraud's explicit constructions.
	
	A conceptual proof of the two splittings of fibrations is obtained via a result of
	\citep[Theorem~3.2]{Lack:codoc}, which we shall apply in its dual form, as we recall in
	the following.} 

\begin{theorem}[Lack]\label{thm:cohcom}
	If $T$ is a 2-comonad on a 2-category $\ct{K}$ admitting descent objects, and $T$
	preserves them, 
	then the inclusion $T\text{-CoAlg}_s \futo \text{Ps-}T\text{-CoAlg}$ of strict coalgebras
	into pseudo-coalgebras has a right adjoint, 
	and the components of the counit are equivalences in $\text{Ps-}T\text{-Alg}$.
	In particular this is the case if $\ct{K}$ has iso-inserters and equifiers, and $T$
	preserves these.
\end{theorem}

The following two propositions show that the 2-monad $\M$ and the 2-comonad $\N$ satisfy
the assumptions of \thx\ref{thm:cohcom}, guaranteeing the existence of the
strictification adjoints. 

\begin{proposition}\label{propcatoverbhasdescentobj}
	$\ct{Cat}/\ct{B}$ has all weighted 2-colimits, created by the domain functor into
	$\ct{Cat}$. In particular, it has all codescent objects. 
	
	Moreover $\ct{Cat}/\ct{B}$ also has all iso-inserters and equifiers, thus all descent
	objects. 
\end{proposition}
\begin{proof}
	$\ct{Cat}/\ct{B}$ is the 2-category of strict coalgebras for the 2-comonad $-\times
	\ct{B}$ on $\ct{Cat}$. So the forgetful into $\ct{Cat}$ creates all weighted
	2-colimits. Since $\ct{Cat}$ is cocomplete as a 2-category, $\ct{Cat}/\ct{B}$ is
	cocomplete as well. As shown in \citep{Lack:codoc}, codescent objects are
	particular weighted 2-colimits. 
	
	The iso-inserter of two functors $F,G:\ct{C}\futo\ct{D}$ over $\ct{B}$ is given by the
	category whose objects are all pairs ${(C,\phi_C)}$ with $C$ in \ct{C} and
	$\phi_C:F(C)\cong G(C)$ an isomorphism in $\ct{D}$ over the identity, and whose morphisms
	${(C,\phi_C)}\to{(C',\phi_{C'})}$ are morphisms $f:C\to C'$ in $\ct{C}$ such that
	$G(f)\phi_C=\phi_{C'}F(f)$. We are thus restricting the usual inserter in $\ct{Cat}$
	taking only those $\phi_C$ that are vertical.
	
	Equifiers in $\ct{Cat}/\ct{B}$ are just calculated in $\ct{Cat}$. By
	\citep{Lack:codoc}, $\ct{Cat}/\ct{B}$ has then descent objects as well, as they
	can be produced via an iso-inserter followed by two equifiers. 
\end{proof}

\begin{proposition}\label{propmonadpreservescodescentobj}
	The 2-monad $\M$ preserves codescent objects. The 2-comonad $\N$ preserves descent
	objects.
\end{proposition}
\begin{proof}
	By \prx\ref{propmhasrightadj}, $M\dashv N$. So $M$ preserves all weighted 2-colimits and
	$N$ preserves all weighted 2-limits. 
\end{proof}

We can now present a new, conceptual proof for the following theorem, which firstly
appeared in \citep{GiraudJ:cohna}.

\begin{theorem}\label{theortwosplittings}
	The forgetful 2-functor $V:\ct{SpFib}(\ct{B})\futo \ct{Fib}(\ct{B})$ from split fibrations
	over $\ct{B}$ to cloven fibrations over $\ct{B}$ has both a left 2-adjoint $L$ and a right
	2-adjoint $R$. 
	
	Moreover, the components of the unit of the adjunction $L\dashv V$ are equivalences in
	$\ct{Fib}(\ct{B})$. And the components of the counit of $V\dashv R$ are equivalences in
	$\ct{Fib}(\ct{B})$. 
\end{theorem}
\begin{proof}
	Thanks to \prx\ref{propcatoverbhasdescentobj} and
	\prx\ref{propmonadpreservescodescentobj}, \citet{Lack:codoc} (see
	\thx\ref{thm:cohcom}) guarantees that the forgetful 2-functor from strict algebras to
	pseudo-algebras for $\M$ has a left 2-adjoint, such that the components of the unit are
	equivalences between pseudo-algebras. Dually, the forgetful 2-functor from strict
	coalgebras to pseudo-coalgebras for $\N$ has a right 2-adjoint, such that the components
	of the counit are equivalences between pseudo-coalgebras. Both forgetful 2-functors
	coincide (up to isomorphism) with $V$, by \thx\ref{cps} and \thx\ref{mthm}. 
\end{proof}

Since adjoints are unique up to isomorphism, the two splitting adjoints of
\thx\ref{theortwosplittings} need to be realized by the explicit constructions of
\citet{GiraudJ:cohna}. But we can do more than this. \citet{Lack:codoc} also gives
a concrete recipe to calculate the strictification of pseudo-algebras, in terms of
codescent objects. Dually, for the strictification of pseudo-coalgebras in terms of
descent objects. In the rest of this section, we show that our conceptual proof of
\thx\ref{theortwosplittings} also produces the explicit splitting constructions of
\citet{GiraudJ:cohna}. 

\begin{remark}\label{remrightsplitting}
	{Using the explicit construction of descent objects in
		$\ct{Cat}/\ct{B}$ given in \prx\ref{propcatoverbhasdescentobj}, one can recover the
		explicit construction of the right splitting of fibrations of \citet{GiraudJ:cohna}.}
	Let $p:\ct{E}\futo\ct{B}$ be a cloven fibration, with pseudo-coalgebra structure for the
	2-comonad $\N$ given by a map $\alpha:p\to N(p)$ and isomorphisms $\alpha_{\eta'}$ and
	$\alpha_{\mu'}$ (see \rmx\ref{remcoalgstructoffib}). Following the concrete recipe of
	\citet{Lack:codoc} to strictify pseudo-coalgebras, we have that $R(p)$ is the
	descent object in $\ct{SpFib}(\ct{B})$ of the coherence data 
	$$\begin{tikzcd}
		N(p) \arrow[r,shift left=3ex,"{\mu'_p}"]\arrow[r,shift right=2.5ex,"{N\alpha}"']& N^2(p) \arrow[l,shift left=0.5ex,"{N\eta'_p}"'{inner sep=0.15ex}]\arrow[r,shift left=3ex,"{\mu'_{Np}}"]\arrow[r,shift right=2.5ex,"{N^2\alpha}"'] \arrow[r,shift right=0.5ex,"{N\mu'_p}"{inner sep=0.15ex}]& N^3(p)
	\end{tikzcd}$$
	By \prx\ref{propcatoverbhasdescentobj} and \prx\ref{propmonadpreservescodescentobj}, such descent object is calculated in $\ct{Cat}/\ct{B}$, as an inserter followed by two equifiers. The first step is to calculate the iso-inserter of
	$$\begin{tikzcd}
		N(p) \arrow[r,shift left=0.8ex,"{\mu'_p}"]\arrow[r,shift right=0.8ex,"{N\alpha}"']& N^2(p)
	\end{tikzcd}$$
	By \prx\ref{propcatoverbhasdescentobj}, such iso-inserter is given by the category whose
	objects are all pairs $({\ple{b,X}},\phi)$ with ${\ple{b,X}}$ in $\ct{G}_p$ and
	$\phi:\mu'_p(\ple{b,X})\cong (N\alpha)(\ple{b,X})$ an isomorphism in $\ct{G}_{Np}$ over
	the identity, and whose morphisms $({\ple{b,X}},\phi)\to ({\ple{b',X'}},\phi')$ are
	morphisms $\ple{h,\lambda}$ in $\ct{G}_p$ such that $(N\alpha)(\ple{h,\lambda})\circ
	\phi=\phi'\circ \mu'_p(\ple{h,\lambda})$. 
	
	In the notation of \rmx\ref{remdescr}, $\phi$ is a natural isomorphism
	$$\begin{tikzcd}
		\ct{B}/b\ar{rr}{\overline{X}}[name=a,']{}\ar{rd}[']{X}&&{\ct{G}_p}\\
		&\ct{E}\ar{ru}[name=b]{}[']{\alpha}&
		\ar["\scalebox{.3}{$\bullet$}"{sloped,anchor=south},from=a,to=b]{}[']{\phi}
	\end{tikzcd}$$
	over $\ct{B}$. For every $f:a\to b$ in $\ct{B}$, the component $\phi_f$ of $\phi$ on $f$ is given by a natural isomorphism 
	$$\begin{tikzcd}
		&\ct{B}/b\ar{rd}[name=b,']{}[]{X}\\
		\ct{B}/a\ar{rr}[']{(-)^*X(f)}[name=a]{}\ar{ru}[{inner sep=0.2ex}]{f\circ -}&&\ct{E}
		\ar["\scalebox{.3}{$\bullet$}"'{sloped,anchor=south},from=b,to=a]{}[']{\phi_f}
	\end{tikzcd}$$
	over $\ct{B}$. For every $u:a'\to a$ in $\ct{B}$, the component $\phi_{f,u}$ of $\phi_f$ on $u$ is given by an isomorphism
	$$\phi_{f,u}:X(fu)\cong u^*X(f)$$
	in $\ct{E}$ over $\id{a'}$. Naturality of $\phi_f$ on $u$ yields in particular a commutative triangle
	$$\begin{tikzcd}[row sep=3.5ex,column sep=3.5ex]
		X(fu)\ar{rr}{X(u)}[name=a,']{}\ar{rd}[']{\phi_{f,u}}&&X(f)\\
		&u^*X(f)\ar{ru}[name=b]{}[']{\opr{Cart}(u,X(f))}
	\end{tikzcd}$$
	where $\opr{Cart}(u,X(f))$ is the chosen cartesian lifting of $u$ to $X(f)$. This triangle
	subsumes all the other conditions $\phi$ needs to satisfy, by cartesianity arguments. 
	
	So an arbitrary object of the iso-inserter is equivalently given by $\ple{b,X}$ equipped
	with chosen isomorphisms $\phi_{f,u}$ that satisfy the commutative triangle above. Under
	the axiom of choice, this is equivalent to restrict to those $\ple{b,X}$ in $\ct{G}_p$
	such that $X$ is a cartesian functor, \ie a functor that preserves the cleavage up to
	isomorphism.
	
	The naturality-like condition that the morphisms of the iso-inserter need to satisfy is
	equally subsumed under the commutative triangle above. So that morphisms in the
	iso-inserter are simply all morphisms $\ple{h,\lambda}$ in $\ct{G}_p$. 
	
	In order to produce a descent object, we should then take two equifiers to force the following two conditions (the second one is a cocycle condition):
	$$\phi_{f,\id{}}=\opr{\alpha_{\eta'}}:X(f)\cong(\id{})^*X(f)$$
	$$\begin{tikzcd}
		X(fuv)\arrow[r,"{\phi_{f,uv}}"]\arrow[d,"{\phi_{fu,v}}"'] & (uv)^*X(f) \arrow[d,iso,"{\alpha_{\mu'}}"{inner sep=1.15ex}]\\
		v^*X(fu)\arrow[r,"{v^*\phi_{f,u}}"'] & v^*u^*X(f)
	\end{tikzcd}$$
	{where the isomorphisms $\alpha_{\eta'}$ and $\alpha_{\mu'}$ are those of the
		pseudo-coalgebra structure $\alpha$ on $p$, \ie those given by the chosen cleavage of
		$p$ (see \rmx\ref{remcoalgstructoffib}). Both conditions are however true for all
		$(\ple{b,X},\phi)$, as they are subsumed by the commutative triangle above,
		by cartesianity arguments. So the iso-inserter calculated above is already the descent
		object we needed.}
	
	We conclude that the concrete recipe of \citet{Lack:codoc} to strictify pseudo-coalgebras 
	translates as taking $R(p)$ to be the restriction of $N(p)$ to those objects $\ple{b,X}$
	with $X$ that preserves the cleavage up to isomorphism (actually with chosen
	isomorphisms), without any further condition on morphisms. This is precisely
	the right splitting construction of \citet{GiraudJ:cohna}.
\end{remark}

\begin{remark}\label{remessimgivesrightsplitting}
	We note another way to recover the right adjoint splitting of
	fibrations. By \thx\ref{mthm}, the pseudo-coalgebra structure $\alpha$ of a cloven
	fibration $p$ is a right adjoint right inverse of the counit $\eta'_F:N(F)\futo F$. This
	$\alpha$ is a homomorphism of 
	fibrations which embeds $F$ into the split fibration $N(F)$. Closing under isomorphism the
	image of $F$ into $N(F)$ provides a split fibration equivalent to $F$.
	
	This recovers also the right splitting construction of \citet{GiraudJ:cohna}, as by
	\rmx\ref{remcoalgstructoffib} such image is made of triangles 
	$$\begin{tikzcd}[row sep=4.75ex,column sep=4.75ex]
		\ct{B}/{p(E)}\ar{rr}{(-)^* E}\ar{rd}[']{\partial_0}&&\ct{E}\ar{ld}{p}\\
		&\ct{B}
	\end{tikzcd}$$
	and $(-)^*E$ preserves the cleavage up to isomorphism.
\end{remark}

In the following \rmx\ref{remleftsplitting}, we recover the explicit construction of the
left splitting of fibrations given in \citep{GiraudJ:cohna}. Notice that the
explicit construction of codescent objects in $\ct{Cat}$ is much more convoluted than the
one of descent objects. So it is better to use the concrete recipe of strictification of
pseudo-coalgebras given by \citet{Power:gencr}, later refined by \citet{Lack:codoc}. Such
an approach uses enhanced factorization systems. We recall
Lack's result \citeyearpar[Theorem~4.10]{Lack:codoc}, built on results of
\citet{Power:gencr}.

\begin{theorem}[Lack, Power]\label{thm:cohmnd}
	If $\ct{K}$ is a 2-category with an enhanced factorization system $(\ct{E}, \ct{M})$
	having the property that if $j$ in \ct{M} and $jk \cong 1$ then $kj \cong 1$, and if $T$
	is a 2-monad on $\ct{K}$ for which $Tf$ in \ct{E} whenever $f$ in \ct{E}, then the
	inclusion $T\text{-Alg}_s \futo \text{Ps-}T\text{-Alg}$ has a left adjoint, and the
	components of the unit of the adjunction are equivalences in $\text{Ps-}T\text{-Alg}$.
\end{theorem}

While it seems that \rmx\ref{remessimgivesrightsplitting} could as well be inscribed in
this idea, it is hard to capture the essential image in a strict factorization system. 

We now use Lack (and Power)'s concrete proof of \thx\ref{thm:cohmnd} to recover Giraud's
explicit construction of the left splitting of fibrations. 

\begin{remark}\label{remleftsplitting}
	Recall that a factorization system on a 2-category is \dfn{enhanced} when squares
	which are pseudo morphisms in $\ct{Cat}^{\ct{2}}$ with domain in \ct{E} and codomain in
	\ct{M}---so they commute up to an invertible 2-cell $\alpha$---can be filled with a unique
	diagonal such that the upper triangle commutes strictly and the lower one commutes up to a
	unique invertible 2-cell which coincides with $\alpha$ on the whole square.
	
	{The factorization system on $\ct{Cat}$ where \ct{E} consists of the
		bijective-on-objects functors and 
		\ct{M} consists of the fully-faithful functors, is enhanced. This lifts 
		to a factorization system on the 2-category $\ct{Cat}/\ct{B}$, which is easily seen to be
		enhanced.}
	
	The other two assumptions of \thx\ref{thm:cohmnd} are easily verified for the 2-monad
	$\M$. Indeed a fully faithful functor with a right quasi-inverse is clearly an
	equivalence. And it is easily seen from the definition of the 2-monad $\M$ (see
	\prx\ref{propdefmonad}) that $M$ preserves bijective-on-objects functors. In fact, the
	action of $M$ on morphisms in $\ct{Cat}/\ct{B}$, induced by the universal property of the
	comma object, is almost trivial. 
	
	Power's insight is that an enhanced factorization system can be used to extract a strict
	algebra from a pseudo one. In particular, the underlying object of the strict algebra can
	be computed factoring the pseudo-algebra map. 
	
	Let $p:\ct{E}\futo\ct{B}$ be a cloven fibration, with pseudo-algebra structure for the
	2-monad $\M$ given by a map $\alpha:M(p)\to p$ and isomorphisms $\alpha_\eta$ and
	$\alpha_\mu$ that regulate respectively liftings in the chosen cleavage of $p$ of an
	identity and of a composite. Using (the proof of) \thx\ref{thm:cohmnd}, we obtain that the
	underlying object of the left splitting $L(p)$ of $p$ is the full image of the
	pseudo-algebra map $\alpha:M(p)\to p$. That is, the objects are the same of $M(p)$, but an
	arrow from $(f,a)$ to $(f',a')$ is an arrow $\alpha(f,a) \to \alpha(f',a')$ in $\ct{E}$
	and thus an arrow $f^*a\to (f')^*a'$ in $\ct{E}$. This is precisely the left splitting
	construction of \citet{GiraudJ:cohna}.
\end{remark}

\bibliographystyle{plainnat}
\bibliography{MRS_biblio}

\end{document}